\documentclass[11pt]{article}

\usepackage[T1]{fontenc}
\usepackage{amsmath,amssymb,amsthm}
\usepackage[margin=1in]{geometry}

\newtheorem{theorem}{Theorem}
\newtheorem{conjecture}{Conjecture}
\newtheorem{cor}{Corollary}
\newtheorem{lemma}{Lemma}

\newcommand{\F}{\mathcal F}
\newcommand{\Gr}[2]{\genfrac{[}{]}{0pt}{}{#1}{#2}_q}
\newcommand{\La}{\operatorname{La}}
\newcommand{\Laq}{\operatorname{La}_q}
\newcommand{\Sigmaq}{\Sigma_q}

\title{Forbidden subposet problems in the linear lattice}
\author{Bal\'azs Patk\'os \and Casey Tompkins}
\date{}

\begin{document}

\maketitle

\begin{abstract}
    We study weak and strong forbidden subposet problems in the linear lattice \(L_n(q)\). We show that the \(q\)-analogues of the Bukh--Griggs--Lu conjecture fail for every prime power \(q\). If \(q\ge3\) and \(n\) and \(d\) have opposite parity, then
\(
\La_q(n,L_d(q))=\La_q^*(n,L_d(q))=\Sigma_q(n,d).
\)
We also show that
\(\La_q(n,D_s)=\La_q^*(n,D_s)=\Sigma_q(n,2)
\)
for \(2\le s\le q\) and all \(n\), and for \(s=q+1\) when \(n\) is odd. For \(n\) even, a maximum strong \(D_{q+1}\)-free family can be contained from the three middle layers. The case $s=2$ settles the $q$-analog of the diamond conjecture in the affirmative.
\end{abstract}
\section{Introduction}

This paper addresses the following extremal problem for finite posets: if $(P,\le_P)$ and $(Q,\le_Q)$ are posets, then $Q'\subseteq Q$ is a \textit{weak copy} of $(P,\le_P)$ in $(Q,\le_Q)$ if there exists an order-preserving bijection $\iota:P\rightarrow Q'$. If $\iota$ is a poset-isomorphism ($p\le_P p'$ if and only if $\iota(p)\le_Q \iota(p')$), then $Q'$ is a \textit{strong copy} of $(P,\le_P)$. What is the maximum size $\La(P,Q)$ of a subset $R\subset Q$ that contains no weak copies of $P$ and similarly, what is the maximum size $\La^*(P,Q)$ of a subset $R\subset Q$ that contains no strong copies of $P$? If $P=C_2$ the chain on two elements, then $\La(C_2,Q)=\La^*(C_2,Q)$ is the maximum size of an antichain in $Q$. When $(Q,\le_Q)$ is the Boolean lattice $B_n=(2^{[n]},\subseteq)$, then we use the notation $\La(n,P)$ and $\La^*(n,P)$ introduced by Katona and Tarj\'an \cite{KatonaTarjan}. In the last 4 decades, there has been a considerable interest and research on this special case of forbidden subposet problems (see the surveys \cite{AMP,GLsurv} and Chapter 7 of \cite{GP}). Ellis, Ivan, and Leader \cite{EIL} obtained a breakthrough result by a construction that refuted a long-standing conjecture of the field. Several follow-up papers \cite{Patkos,PT,Tompkins} have appeared recently, and the aim of the present paper is to find similarities and differences between the Boolean case and the less extensively studied \cite{Gerbner,SS,SY,XT} case of $Q$ being the linear lattice $L_n(q)$. We write $\Laq(n,P):=\La(P,L_n(q))$ and $\Laq^*(n,P)=\La^*(P,L_n(q))$.

Let us introduce some notation. For a prime power $q$, let $\mathbb F_q$ denote the field of $q$ elements, and let $V$ be a vector space of dimension $n$ over $\mathbb F_q$, most often we think of $V$ as $\mathbb F_q^n$.  $L_n(q)$ is then the set of all subspaces of $V$ ordered by inclusion.
The set $\mathcal L_d=\Gr{V}{d}$ of all $d$-dimensional subspaces of $V$ has size $\Gr{n}{d}=\prod_{i=0}^{d-1}\frac{q^{n-i}-1}{q^{d-i}-1}$ and by symmetry
$\Gr nr=\Gr n{n-r}$. We will sometimes write  $[r]_q=\Gr r1=\frac{q^r-1}{q-1}$ so that $\Gr{n}{d}=\frac{[n]_q\cdot [n-1]_q\cdot \ldots \cdot [n-d+1]_q}{[d]_q\cdot [d-1]_q\cdot \ldots \cdot [1]_q}$.
We denote by $\Sigma(n,k)=\sum_{i=1}^k\binom{n}{\lfloor \frac{n-k}{2}\rfloor+i}$ and $\Sigma_q(n,k)=\sum_{i=1}^k\Gr{n}{\lfloor \frac{n-k}{2}\rfloor+i}$ the sum of the $k$ largest binomial and $q$-nomial coefficients.
 
There is a natural and simple way to construct large $P$-free families in $B_n$ and in $L_n(q)$. Let $e(P)$ be the maximum integer $k$ such that any consecutive set of $k$ layers is weak $P$-free in $B_n$ for any $n$. We write $e^*(P)$, $e_q(P)$, and $e^*_q(P)$ for the strong and vector space variants. By definition, the inequalities $\La(n,P)\ge \Sigma(n,e(P))$, $\La^*(n,P)\ge \Sigma(n,e^*(P))$, $\Laq(n,P)\ge \Sigma_q(n,e_q(P))$, and $\Laq^*(n,P)\ge \Sigma_q(n,e^*_q(P))$ hold. It was conjectured in \cite{Bukh,GriggsLu} that asymptotic equality holds for all posets $P$ in the first inequality, and later analogous conjectures were made \cite{Gerbner,P2} for the other cases.

As mentioned before, the conjectures for the Boolean case were refuted by Ellis, Ivan, and Leader \cite{EIL}, and in \cite{PT}, the authors obtained the stronger result that for any integer $K$ there exists a poset $P$ with $e(P)=2$ and $\La(n,P)\ge K\binom{n}{\lfloor \frac{n}{2}\rfloor}$. This latter statement cannot have an analog in $L_n(q)$ simply because it is more concentrated on its middle levels, and $|L_n(q)|=\sum_{i=0}^n\Gr{n}{i}\le 3\Gr{n}{\lfloor \frac{n}{2}\rfloor}$ for any $n$ and $q$.

\bigskip

Our first result, a construction, disproves the $q$-analog conjectures. Most constructions showing $\La(n,P)>(1+\varepsilon)\Sigma(n,e(P))$ use more than $e(P)$ middle layers in a way that missing parts of 'very middle layers' are overcompensated by lowest and highest layers of the construction. This is possible as $\binom{n}{\lfloor \frac{n}{2}\rfloor+i}$ are asymptotically equal provided $|i|<C$ for some constant $C$. This is not true for $q$-nomial coefficients, and so these types of constructions do not seem to have a $q$-analog. On the other hand, as it was pointed out in \cite{GerbnerPatkos}, the construction of \cite{EIL} can be used to obtain constructions for special posets of height\footnote{the height $h(P)$ of a poset $P$ is the maximum number of elements forming a chain in $P$} 2. For a graph $G$ its \textit{vertex--edge incidence poset} $I_G$ is $(V(G)\cup E(G)),\subseteq)$, i.e. elements are vertices and edges of $G$ and $v<e$ if and only if the vertex $v$ is incident to the edge $e$.
Let us write $P_q=I_{K_{q+2}}$.

Note that for any poset $P$, we have $e_q(P)\le e(P), e_q^*(P)\le e^*(P)$ as $B_n$ embeds into $L_n(q)$ for any prime power $q$ as shown by the mapping $\iota(A)=\langle e_a:a\in A\rangle$ where $e_1,e_2,\dots,e_n$ is an arbitrary basis of $\mathbb F_q^n$. Also, for any graph $G$ with at least one edge, we have $e^*(I_G)=1$. Indeed, if $n>2|V(G)|$ with $V(G)=\{v_1,v_2,\dots,v_\ell\}$, then the mapping defined as $\iota(v_i)=\{i\}\cup K$ and $\iota(e)=\{i:v_i\in e\}\cup K$ with $K=[\lfloor \frac{n}{2}\rfloor +1,n]$ shows $e^*(I_G)<2$. In particular, $e_q(P_q)=e^*_q(P_q)=1$.

\begin{theorem}\label{conjfalse}
    For any prime power $q$, we have \[
    \frac{\Laq^*(n,P_q)}{\Gr{n}{\lfloor \frac{n}{2}\rfloor}}\ge \frac{\Laq(n,P_q)}{\Gr{n}{\lfloor \frac{n}{2}\rfloor}}\ge \begin{cases}
  1+\dfrac{c_q}{q}+o(1),& n\text{ even},\\[7pt]
  1+c_q+o(1),& n\text{ odd},
 \end{cases}
 \qquad
 c_q:=\displaystyle\prod_{i=1}^{\infty}(1-q^{-i}).\]
\end{theorem}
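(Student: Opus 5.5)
The approach is to prove only the second inequality. The first, $\Laq^*(n,P_q)\ge\Laq(n,P_q)$, holds because a family with no weak copy of $P_q$ has no strong copy either. For the lower bound on $\Laq(n,P_q)$ I will build, for every $n$, a family $\F=\mathcal A\cup\mathcal U$ with $\mathcal A\subseteq\mathcal L_k$ and $\mathcal U\subseteq\mathcal L_{k+1}$. For $n$ odd I take $k=\lfloor n/2\rfloor$, so that both layers are middle layers. For $n$ even I take $k=n/2$, and then $|\mathcal L_{k+1}|/|\mathcal L_k|\to 1/q$, which explains the factor $1/q$ in the even case.

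The first step is to reduce weak $P_q$-freeness of such a two-layer family to a local degree condition. In $\F$ a chain has at most two elements, and comparable pairs only go from $\mathcal A$ to $\mathcal U$. So in a weak copy every vertex of $K_{q+2}$ must be sent to some $A\in\mathcal A$, and the $q+1$ edges at that vertex must be sent to $q+1$ distinct members of $\mathcal U$ containing $A$. It therefore suffices that
\[
\deg_{\mathcal U}(A):=|\{U\in\mathcal U: A\subset U\}|\le q \quad\text{for every } A\in\mathcal A .
\]
This is the $q$-analogue of the Ellis--Ivan--Leader idea as adapted in \cite{GerbnerPatkos}. Since $K_{q+2}$ is $(q+1)$-regular, the threshold $q$ is exactly what the poset $P_q$ allows. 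A double count shows that keeping all of $\mathcal L_k$ would force $|\mathcal U|\le q\Gr nk/[k+1]_q=o(\Gr nk)$. Hence the construction must delete a small proportion of $\mathcal L_k$ and in return take a constant proportion of $\mathcal L_{k+1}$.

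The second step is to define $\mathcal U$ and $\mathcal A$ through a fixed auxiliary structure. I would fix a subspace $W$ of suitable dimension (about $n-k$), possibly together with a flag or a fixed complement $Z$ of $W$. I would then let $\mathcal U$ consist of the $(k+1)$-spaces in "general position" with respect to this structure: roughly, those that meet $W$ in a prescribed way, with a normalised basis description (a reduced echelon form relative to $Z$) that determines $U$ uniquely. The set $\mathcal A$ would be all $k$-spaces except those lying in more than $q$ members of $\mathcal U$. Ideally the rule is chosen so that every $k$-space in general position lies in at most $q$ members of $\mathcal U$: for example, it extends to a member of $\mathcal U$ only by adjoining a vector from a fixed $1$-dimensional space modulo $A+W$, which has exactly $q$ admissible representatives up to scaling. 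If this works, $\mathcal A$ is everything except a set of $k$-spaces in special position.

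The third step is the counting. I would show that the proportion of $(k+1)$-spaces in $\mathcal U$ tends to
\[
c_q=\prod_{i\ge1}(1-q^{-i}),
\]
which is the limiting probability that a uniformly random square matrix over $\mathbb F_q$ is invertible, i.e.\ that a random subspace is complementary to a fixed one. At the same time the deleted part of $\mathcal L_k$ should be $o(\Gr nk)$, or at least small enough that it is outweighed. Then $|\F|\ge\Gr nk+c_q\Gr n{k+1}-o(\Gr n{\lfloor n/2\rfloor})$, which gives $1+c_q$ for $n$ odd and $1+c_q/q$ for $n$ even.

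The main obstacle is the second step: finding a rule for $\mathcal U$ that contains a $c_q$-proportion of $\mathcal L_{k+1}$ while only a negligible set of $k$-spaces has $\mathcal U$-degree exceeding $q$. The naive choices fail. "$U\cap W=0$" with all $k$-spaces kept gives degrees of order $q^{n-k}$. Taking a complementary condition on $\mathcal A$ instead makes $\F$ an antichain, which cannot beat Sperner. I would first try to make the degree condition hold exactly, by encoding subspaces as row spaces of matrices $[I\mid M]$ and selecting $U$ via a condition on one distinguished row or column, so that the extension of a generic $A$ is forced up to $q$ choices. The $c_q$ count would then follow from counting full-rank matrices.
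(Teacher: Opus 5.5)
\paragraph{There is a genuine gap.} The first inequality is correct, and so is your heuristic that $c_q$ is the limiting proportion of $(k+1)$-spaces complementary to a fixed $(n-k-1)$-space. However, you never produce a family, and the framework you commit to cannot produce one. Your sufficient condition $\deg_{\mathcal U}(A)\le q$ for all $A\in\mathcal A$ is far too strong. Finish the double count you began, with $\mathcal D=\mathcal L_k\setminus\mathcal A$. Counting incident pairs $(A,U)$ with $A\in\mathcal L_k$, $U\in\mathcal U$ gives
\[
|\mathcal U|\,[k+1]_q\le q|\mathcal A|+[n-k]_q|\mathcal D| .
\]
Using $|\mathcal A|+|\mathcal D|=\Gr nk$ and $\Gr nk[n-k]_q/[k+1]_q=\Gr n{k+1}$, this yields
\[
|\mathcal A|+|\mathcal U|\le\max\Bigl\{\Bigl(1+\frac{q}{[k+1]_q}\Bigr)\Gr nk,\ \Gr n{k+1}\Bigr\}=(1+o(1))\Gr n{\lfloor n/2\rfloor}.
\]
So every two-layer family satisfying your condition is asymptotically no larger than one middle layer. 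For example, when $n=2k+1$ you get $|\mathcal D|\ge|\mathcal U|-q\Gr nk/[k+1]_q$. Hence the hope that only $o(\Gr nk)$ $k$-spaces need deleting is impossible once $\mathcal U$ is a constant fraction of $\mathcal L_{k+1}$. No choice of the ``rule'' in your second step can rescue this.

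\paragraph{The missing idea.} $P_q$-freeness has to come from the global structure of $K_{q+2}$: all pairs of vertex-images must be joined. It cannot come from a degree bound at a single vertex. This is what allows you to keep all of $\mathcal L_k$ despite huge degrees, and the family you dismissed is exactly the paper's construction.

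Take $k=\lfloor n/2\rfloor$, $\dim W=n-k-1$, and $\F=\mathcal L_k\cup\{U\in\mathcal L_{k+1}:U\cap W=\{0\}\}$. In a weak copy, the vertex-images $A_1,\dots,A_{q+2}$ are $k$-spaces and each edge-image is $B_{ij}=A_i+A_j$, a $(k+1)$-space. The $B_{1i}$ are distinct, and a short lemma says that three $k$-spaces with pairwise $(k-1)$-dimensional intersections either span a common $(k+1)$-space or share a $(k-1)$-space. Hence all $A_i$ contain a common $(k-1)$-space $S$.

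Since $S\subset B_{ij}$ and $B_{ij}\cap W=\{0\}$, we get $S\cap W=\{0\}$, so $V/(S+W)$ is $2$-dimensional. The projection $V/S\to V/(S+W)$ is injective on every $B_{ij}/S$. Therefore the lines $A_i/S$ map to $q+2$ distinct points of a projective line over $\mathbb F_q$, which has only $q+1$ points, a contradiction.

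The complements of $W$ number $q^{(k+1)(n-k-1)}$. Dividing by $\Gr nk$ gives $c_q$ for $n$ odd and $c_q/q$ for $n$ even, as in your heuristic.
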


The paper \cite{EIL} showed that for any $B_d$ with $d\ge 4$, we have $\La(n,B_d)\ge (1+\varepsilon_d)\Sigma(n,d)$ (and the same statement for the cases $d=2$ and $d=3$ was obtained in \cite{Tompkins} and \cite{Patkos}, respectively), i.e. small Boolean posets are counterexamples to the old conjecture in the Boolean case. We believe that this is not the case for small linear lattices in the linear lattice case. Exact equality with the sum of the largest layers can fail.
Nevertheless, we conjecture that these layers are asymptotically
optimal when the field and the forbidden subspace lattice are fixed.

\begin{conjecture}\label{conj}
For every fixed prime power $q$ and every fixed integer $d\ge 2$, if $n$ tends to infinity, then
\[
\operatorname{La}_q^*(n,L_d(q))
  = (1+o(1))\Sigma_q(n,d).
\]
\end{conjecture}

We establish the stronger conclusion of exact equality when
$q\ge 3$ and $n$ and $d$ have opposite parity.
 This parity assumption  makes the $d$ largest
layers a unique symmetric block of consecutive layers such that all outside layers are much smaller than those in the block. If $n$ and $d$ are of the same parity, then the smallest layer in the block and the largest outside the block have the same size.

\begin{theorem}
\label{thm:subspace-lattice-opposite-parity}
Let $d\geq2$ and $n\geq d+1$, and suppose that $n$ and $d$ have
opposite parity.  If $q\geq3$ is a prime power, then $\Laq^*(n,L_d(q))=\Laq(n,L_d(q))=\Sigmaq(n,d)$.
\end{theorem}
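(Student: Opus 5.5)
We first settle the lower bound and the ordering of the two quantities. A weak copy is in particular a strong copy, so every weak-$L_d(q)$-free family is strong-free, which gives $\Laq(n,L_d(q))\le \Laq^*(n,L_d(q))$. It therefore suffices to show two things: the $d$ middle layers contain no weak copy of $L_d(q)$, and every strong-$L_d(q)$-free family has size at most $\Sigmaq(n,d)$. The first is a height argument. $L_d(q)$ contains a chain of length $d+1$, and any weak copy maps such a chain to a strictly increasing chain of subspaces. That image needs $d+1$ distinct dimensions, but $d$ consecutive layers offer only $d$. Hence $\Sigmaq(n,d)\le \Laq(n,L_d(q))$.

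For the upper bound we use a weighted chain-counting (LYM-type) argument over maximal chains of $L_n(q)$. Since $n$ and $d$ have opposite parity, $n-d$ is odd. So the $d$ largest layers are the dimensions $k+1,\dots,k+d$ with $k=(n-d-1)/2$, and the layers just outside them, $k$ and $k+d+1$, have size $\Gr{n}{k}$. Their ratio to the smallest block layer is $\Gr{n}{k}/\Gr{n}{k+1}=[k+1]_q/[n-k]_q\approx q^{-(n-2k-1)}=q^{-d}$, which is small for $q\ge3$. We define the Lubell-type weight
\[
w(\F)=\sum_{F\in\F}1/\Gr{n}{\dim F}.
\]
In a uniformly random maximal chain $\mathcal C$, $\mathbb E|\F\cap\mathcal C|=w(\F)$. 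The target inequality is a weighted form, for example
\[
\sum_{F\in\F}\frac{\Gr{n}{\dim F}}{\min(\Gr{n}{\dim F},\Gr{n}{k+1})}\cdot\frac{1}{\Gr{n}{\dim F}}\le d,
\]
or equivalently an inequality showing that a family of size above $\Sigmaq(n,d)$ forces some chain structure. Chains alone cannot work, since a chain of length $d+1$ is not a copy of $L_d(q)$. So we use a richer random object: a random \emph{interval} $[U,W]$ with $\dim W-\dim U=d$. Such an interval is isomorphic to $L_d(q)$, and the family restricted to it must avoid the whole interval as a strong copy. This observation alone is too weak, because the family need only miss one element of each interval. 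We therefore instead consider random flags together with "local" structures: for a family with many elements across $d+1$ distinct levels, we find $d+1$ comparable elements $A_0<\dots<A_d$ and then build the complete lattice inside $[A_0,A_d]$. The point is that a copy of $L_d(q)$ need not be an interval; we need subspaces in $\F$ realizing every subspace of an embedded $\mathbb F_q^d$, with all non-inclusions preserved.

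The main step, and the main obstacle, is a counting lemma. If $\F$ is strong $L_d(q)$-free, then for every maximal chain the elements of $\F$ on it, weighted appropriately, number at most $d$ in expectation. Here elements outside the middle block receive weight at most their layer-size ratio. We would try to prove this by induction on $d$. For $d=2$, $L_2(q)$ is the diamond $D_{q+1}$, a bottom, $q+1$ atoms and a top. Given $A<B$ in $\F$ with $\dim B-\dim A\ge 2$, we count the members of $\F$ strictly between them and use that in $L_n(q)$ the interval has many elements, roughly $q^{\dim B-\dim A-1}$ times more than a chain sees. This is where $q\ge 3$ enters: a double-counting of (chain, member) pairs loses a factor comparable to $1/q$ from a union bound, and $q\ge3$ makes $q^{-d}$-size outer layers affordable. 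For general $d$ we would pick a random full flag and a random complementary structure, and argue via the top and bottom elements of $\F$ on the chain that either the interval between them contains a strong copy (by induction on $d-1$ applied within a hyperplane or quotient) or $\F$ has density deficit in the middle. The opposite-parity hypothesis guarantees that the $d$ block layers dominate, each strictly larger than the outer layers by a factor of about $q^{d}$. A final linear-programming comparison then yields $|\F|\le\Sigmaq(n,d)$, with equality forcing the middle block.
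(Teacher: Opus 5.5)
\textbf{Verdict: genuine gap.} Your lower bound is fine. The height argument rules out weak copies in $d$ consecutive layers, and $\Laq\le\Laq^*$ holds, though the reason is that a strong copy is in particular a weak copy, not the other way round.

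The upper bound, however, is not proved. Everything rests on a counting lemma that you only say you \emph{would try} to prove by induction on $d$. Neither the $d=2$ sketch (a union bound losing a factor $1/q$) nor the general sketch (random flags, a ``complementary structure'', induction inside a hyperplane or quotient) produces an actual inequality. The one concrete target you state simplifies to $\sum_{F\in\F}1/\min\bigl(\Gr{n}{\dim F},\Gr{n}{k+1}\bigr)\le d$, and this is false for the extremal family itself once $d\ge3$. For $d=3$ and $n=2m$, the middle three layers give $2+\Gr{n}{m}/\Gr{n}{m-1}>3$. Indeed, every member has weight at least $1/\Gr{n}{k+1}$, so the inequality would give $|\F|\le d\Gr{n}{k+1}<\Sigmaq(n,d)$, contradicting your own lower bound. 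So the chain-based LYM route, as formulated, cannot reach $\Sigmaq(n,d)$.

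The missing idea, which is the paper's route, is one you dismissed. A strong $L_d(q)$-free family need only miss one element of each copy, but that is enough once you drop the restriction to intervals. For any ranks $r_0<r_1<\dots<r_d$ there is a strong copy of $L_d(q)$ whose rank-$i$ elements lie in $\mathcal L_{r_i}$. Take $U\mapsto A\oplus X_1\oplus\cdots\oplus X_{\dim U}\oplus U$ for $U\le E$, where $A,E,X_1,\dots,X_d$ form a direct sum with $\dim A=r_0$, $\dim E=d$ and $\dim X_i=r_i-r_{i-1}-1$. Averaging over a uniformly random element of $\operatorname{GL}(V)$ gives $\sum_{i=0}^d\Gr{d}{i}x_{r_i}\ge1$, where $x_r$ is the proportion of $\mathcal L_r$ missing from $\F$. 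Apply this to $r<k+1<\dots<k+d$ for each $r\le k$, and to $k+1<\dots<k+d<s$ for each $s\ge k+d+1$. Multiply by the layer sizes and sum. With $S=\sum_{r\le k}\Gr{n}{r}$ this yields
\[
\sum_{r\notin[k+1,k+d]}\Gr{n}{r}x_r+S\sum_{j=1}^d\Bigl(\Gr{d}{j-1}+\Gr{d}{j}\Bigr)x_{k+j}\ge2S .
\]
The proof then closes with the numerical inequality $\Gr{n}{k+j}\ge S\bigl(\Gr{d}{j-1}+\Gr{d}{j}\bigr)$ for all $1\le j\le d$. This gives $|L_n(q)\setminus\F|\ge2S$, i.e.\ $|\F|\le\Sigmaq(n,d)$.

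This comparison is where both hypotheses are used, not in any union bound. Opposite parity gives $\Gr{n}{k+t}/\Gr{n}{k}>q^{t(d+1-t)}$ and a geometric tail $S<\Gr{n}{k}/(1-q^{-(d+2)})$. The condition $q\ge3$ enters through $\gamma_q=\prod_{h\ge1}(1-q^{-h})^{-1}\le\frac{q-1}{q-2}$, which yields $\gamma_q(q^{-1}+q^{-d})<1-q^{-(d+2)}$. Your heuristic that block layers beat the outer layers by a factor of about $q^d$ only covers $j=1$ and $j=d$. For inner $j$, the block layer must beat $S$ times the much larger weight $\Gr{d}{j-1}+\Gr{d}{j}$.
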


The proof of Theorem \ref{thm:subspace-lattice-opposite-parity} is based on two lemmas: one uses a double counting argument to show a lower bound on the sum of the missing normalized parts of any $d+1$ layers of a strong $L_d(q)$-free family, while the other is an estimate that compares this missing part to the size of middle layers (and which holds only in the opposite parity case). 

Theorem \ref{thm:subspace-lattice-opposite-parity} has an easy but important corollary. Suppose $P$ contains both a smallest and a largest element. Then $e_q(P)=\min\{d: L_d(q)$ contains a weak copy of $P\}$ and $e^*_q(P)=\min\{d: L_d(q)$ contains a strong copy of $P\}$. So Theorem \ref{thm:subspace-lattice-opposite-parity} immediately implies the following.

\begin{cor}
Suppose $P$ contains both a smallest and a largest element and $q\ge 3$ is a prime power. 
\begin{enumerate}
    \item 
    If $n-e_q(P)$ is even, then $\Laq(n,P)=\Sigma_q(n,e_q(P))$.
    \item 
    If $n-e^*_q(P)$ is even, then $\Laq^*(n,P)=\Sigma_q(n,e^*_q(P))$.
\end{enumerate}
\end{cor}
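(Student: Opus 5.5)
The corollary follows from Theorem~\ref{thm:subspace-lattice-opposite-parity}, combined with the characterization of $e_q(P)$ and $e^*_q(P)$ for posets with a smallest and a largest element. We handle the weak case; the strong case is identical with ``weak'' replaced by ``strong'' throughout.

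\emph{Step 1: the characterization.} Let $P$ have minimum $0_P$ and maximum $1_P$. We prove $e_q(P)=d_0$, where $d_0:=\min\{d: L_d(q)\text{ contains a weak copy of }P\}$.

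For $e_q(P)\ge d_0$, take any $k\le d_0-1$ consecutive layers of $L_n(q)$ and suppose they contain a weak copy of $P$. Let $A$ and $B$ be the images of $0_P$ and $1_P$. Every element of the copy lies in the interval $[A,B]$, and $\dim B-\dim A\le k-1$. This interval is isomorphic to $L_m(q)$ with $m=\dim B-\dim A\le d_0-2$. Since $L_m(q)$ embeds into $L_{d_0-1}(q)$ (append extra coordinates), $L_{d_0-1}(q)$ would contain a weak copy of $P$, contradicting the minimality of $d_0$.

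For $e_q(P)\le d_0$, we show that $d_0+1$ consecutive layers $i,\dots,i+d_0$ can contain $P$ once $n$ is large. Fix any $i$-dimensional subspace $A$ and any $(i+d_0)$-dimensional $B\supseteq A$. The interval $[A,B]$ is isomorphic to $L_{d_0}(q)$, which contains $P$. Hence $e_q(P)$ is exactly $d_0$. The same argument with poset-isomorphic embeddings gives the statement for $e^*_q(P)$, because an interval of a linear lattice is an induced subposet, so strong copies are preserved.

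\emph{Step 2: monotonicity of containment.} A family containing a weak (respectively strong) copy of $L_{d_0}(q)$ contains a weak (respectively strong) copy of $P$: compose the order-preserving bijection of $P$ into $L_{d_0}(q)$ with the copy of $L_{d_0}(q)$. Hence every $P$-free family is $L_{d_0}(q)$-free, and
\[
\Laq(n,P)\le \Laq(n,L_{d_0}(q)).
\]
When $n-d_0$ is even, $n$ and $d_0$ have opposite parity, so Theorem~\ref{thm:subspace-lattice-opposite-parity} gives
\[
\Laq(n,L_{d_0}(q))=\Sigma_q(n,d_0).
\]
It needs $d_0\ge 2$ and $n\ge d_0+1$. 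The lower bound $\Laq(n,P)\ge\Sigma_q(n,e_q(P))$ holds by the definition of $e_q$, so equality follows.

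\emph{Step 3: degenerate cases, the main point needing care.} If $d_0=1$, then $P$ has at most two elements, forming a chain. Then $P$ is either a single point, where the statement is trivial, or $C_2$. For $C_2$, the needed value $\Gr{n}{\lfloor n/2\rfloor}$ is the $q$-Sperner theorem; it can be cited directly or obtained from the same LYM-type argument. The case $d_0=0$ is trivial. The condition $n\ge d_0+1$ is automatic: $n-d_0$ even and $n\ne d_0$ give $n\ge d_0+2$. If $n=d_0$, the middle $d_0$ layers do not contain $P$ by minimality, and adding either the empty or the full space still avoids $P$. So the claim degenerates, and I would handle it by noting that $\Sigma_q(n,d_0)$ then excludes only one trivial layer, and checking it directly or excluding it.
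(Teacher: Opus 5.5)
Your route is the paper's own: identify $e_q(P)$ (resp.\ $e_q^*(P)$) with the least $d$ for which $L_d(q)$ contains a weak (resp.\ strong) copy of $P$, note that a $P$-free family is then $L_d(q)$-free, and invoke Theorem~\ref{thm:subspace-lattice-opposite-parity}.

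The step that invokes the theorem is wrong as written. You say ``when $n-d_0$ is even, $n$ and $d_0$ have opposite parity''. That is false: $n-d_0$ even means $n$ and $d_0$ have the \emph{same} parity.

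Theorem~\ref{thm:subspace-lattice-opposite-parity} genuinely needs $n-d$ odd. Its proof writes $n=2a+d-1$, and the estimates in Lemma~\ref{ineq} depend on this, e.g.\ $q^{2r-n-1}\le q^{-(d+2)}$ for $r\le a-1$. In the same-parity case the smallest layer of the middle block ties with the largest layer outside it, and the paper proves nothing there. For instance, $P=D_{q+1}=L_2(q)$ has $e_q(P)=2$, and for even $n$ Theorem~\ref{thm:diamond-positive} only gets down to three middle layers.

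So your Step 2 applies the theorem outside its hypotheses, and the corollary with the hypothesis as printed does not follow. What your argument, and the paper's one-line derivation, actually proves is the version with ``$n-e_q(P)$ odd'' (resp.\ ``$n-e_q^*(P)$ odd''). The printed ``even'' is evidently a slip. You should have flagged the mismatch rather than bridging it with a false parity claim.

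Once the parity is corrected, Step 2 is fine, and Step 3 becomes simpler than you make it:
\begin{enumerate}
\item $n=d_0$ cannot occur.
\item The range $n\le d_0-1$ is trivial. Then $L_n(q)$ embeds in $L_{d_0-1}(q)$, so $L_n(q)$ is itself $P$-free, while $\Sigma_q(n,d_0)=|L_n(q)|$ (reading out-of-range Gaussian coefficients as $0$).
\item $d_0=1$ means $P=C_2$ with $n$ even, which is the $q$-Sperner theorem. Note that a single point has $d_0=0$, not $1$.
\end{enumerate}
The paper passes over these degenerate cases silently, so treating them is a plus.

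There is also an off-by-one in Step 1. To get $e_q(P)\ge d_0$ you must show that $d_0$ consecutive layers are $P$-free, not $d_0-1$. The same argument works: with $k=d_0$ layers you get $m\le d_0-1$, and since $L_m(q)$ embeds in $L_{d_0-1}(q)$, this contradicts the minimality of $d_0$. As written you only obtain $e_q(P)\ge d_0-1$, which would not let you identify $e_q(P)$ with $d_0$.
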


\smallskip

Then we give another proof of Theorem \ref{thm:subspace-lattice-opposite-parity} in the special case of $d=2$ that 'almost works' even if $n$ is even and also covers the $q=2$ case. For an integer $s\geq2$, let $D_s$ be the \textit{generalized diamond}: it has
a minimum element $a$, a maximum element $c$, and $s$ distinct middle
elements $b_1,\ldots,b_s$, with
 $a<b_i<c$ for all $1\leq i\leq s$. Note that $L_2(q)=D_{q+1}$.

\begin{theorem}
\label{thm:diamond-positive}
Let $q$ be a prime power and let $n\geq1$.
Then $\Laq^*(n,D_s)=\Laq(n,D_s)=\Sigmaq(n,2)$ in each of the following cases:
\begin{enumerate}
 \item $2\leq s\leq q$, for every $n$;
 \item $s=q+1$, when $n$ is odd.
\end{enumerate}
Moreover, if $n=2m$ and $s=q+1$, then there is a maximum strong
$D_{q+1}$-free family contained in
 $\mathcal L_{m-1}\cup\mathcal L_m\cup\mathcal L_{m+1}$.
\end{theorem}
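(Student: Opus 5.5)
The lower bound is immediate. The two largest layers of $L_n(q)$ contain no weak copy of $D_s$, since $D_s$ has height $3$. Moreover, strong $D_s$-freeness is implied by weak $D_s$-freeness, so $\Laq(n,D_s)\le \Laq^*(n,D_s)$. It therefore suffices to prove $\Laq^*(n,D_s)\le\Sigmaq(n,2)$ in the stated cases. Since $D_s$ is a strong subposet of $D_{s+1}$, every strong $D_s$-free family is strong $D_{s+1}$-free. Hence the case $s\le q$ follows from the case $s=q+1$, except when $n$ is even, where the $D_{q+1}$ argument only gives the three-middle-layer conclusion. So I plan to run a single chain-counting (LYM-type) argument for $D_{q+1}=L_2(q)$ and to track where the slack appears.

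The main tool is maximal chains of subspaces, refined to count \emph{flag intervals}. Fix a strong $D_{s}$-free family $\F$ and write $f_i=|\F\cap\mathcal L_i|/\Gr ni$ for the normalized layer densities. Counting all maximal chains shows that a typical chain meets $\F$ in $\sum_i f_i$ elements on average. The key structural fact is this: if $A<C$ are both in $\F$ with $\dim C-\dim A=2$, then the interval $[A,C]$ is a copy of $L_2(q)$ whose $q+1$ middle elements are pairwise incomparable. If $s$ of them lie in $\F$, we obtain a strong copy of $D_s$. So for such pairs, at most $s-1$ of the $q+1$ middle subspaces lie in $\F$.

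Next I would consider chains $C_0<\dots<C_n$ together with the choice of the middle element in each length-$2$ window. Concretely, I would double count pairs consisting of a maximal chain $\mathcal C$ and an index $i$ such that the elements at positions $i-1,i,i+1$ of $\mathcal C$ are in $\F$ in a forbidden pattern. I expect the outcome to be a weighted inequality of the form
\[
\sum_i f_i\;-\;\lambda\sum_i \bigl(\text{density of }\F\text{-pairs at distance }2\bigr)\le 2 ,
\]
with a defect term that, after normalization, must be compared against the ratio $\Gr n{i}/\Gr n{m}$ of layer sizes away from the middle. For $s\le q$, at least one of the $q+1$ middle elements of every $\F$-interval of length $2$ is missing. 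This gives a positive proportion $\ge 1/(q+1)$ of missing middle elements, which should pay for any third layer. For $s=q+1$ the loss is only one element out of $q+1$, so I need the layer ratios to be small. For odd $n$, any family using layers beyond the two middle ones pays at layers whose sizes are roughly a factor $q$ smaller, which should suffice. For even $n=2m$, the layers $m\pm1$ are both a factor of about $q$ smaller than the middle layer, so the argument only shows that the extremal families can be pushed into $\mathcal L_{m-1}\cup\mathcal L_m\cup\mathcal L_{m+1}$.

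The main obstacle is making the double count exact rather than asymptotic. Sets in $\F$ lying far apart, say at distance $\ge3$, do not directly create a diamond, so one first needs a shifting or compression step to move $\F$ into consecutive layers. For this I would use a Kleitman-type argument in the style of the Griggs--Li--Lu diamond proofs: replace the elements of $\F$ in the top and bottom layers by shadows and shades, using the normalized matching property of $L_n(q)$, namely $|\partial\mathcal A|/\Gr n{k-1}\ge|\mathcal A|/\Gr nk$. After that, the remaining work is an explicit inequality on $q$-binomials showing that the third layer loses at least as much as it gains. This inequality is where the condition $s\le q$, or $n$ odd, is genuinely used.
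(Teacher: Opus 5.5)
The lower bound and the monotonicity reduction are correct: a strong $D_s$-free family is strong $D_{s+1}$-free, so $\Laq^*(n,D_s)\le\Laq^*(n,D_{q+1})$. The upper bound, however, is not actually proved, and the compression step you rely on would fail as described.

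\textbf{The Lubell inequality is not established.} You only say you \emph{expect} a weighted inequality, and all the content sits in the unspecified defect term. The plain bound $\sum_i f_i\le 2$ is false. For $n=2$, the family consisting of $\{0\}$, $q$ of the $q+1$ points, and $V$ is $D_{q+1}$-free, yet $\sum_i f_i=2+\frac{q}{q+1}$.

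\textbf{The shade compression fails.} Normalized matching gives $|\nabla\F_d|\ge|\F_d|$ for the bottom layer $\F_d=\F\cap\mathcal L_d$, where $\nabla\F_d$ is the set of $(d+1)$-spaces above members of $\F_d$. But the replacements must be spaces \emph{absent} from $\F$, and $\nabla\F_d$ can lie entirely inside $\F$. For example, $\F=\mathcal L_d\cup\mathcal L_{d+1}$ is $D_s$-free and leaves no free slot in $\mathcal L_{d+1}$.

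\textbf{The missing idea} is the paper's Lemma~\ref{lem:up-compression}. You should also allow replacements in $\mathcal L_{d+2}$: a $(d+2)$-space $C>A\in\F_d$ whose interval $[A,C]$ contains $s$ members of $\F\cap\mathcal L_{d+1}$ is forced out of $\F$.
\begin{itemize}
\item Hall's condition is then checked by double counting triples $A<B<C$ with $A\in\mathcal A\subseteq\F_d$ and $B\in\F\cap\mathcal L_{d+1}$.
\item Each $[A,C]$ has $q+1=(s-1)+t$ middle elements, where $t=q+2-s$. So every $C$ that is not forced out misses at least $t$ of them.
\item This gives $|\mathcal A|\le x+y$ whenever $d\le\frac{n-2}{2}$ and $(q+1)[d+1]_q\le t[n-d]_q$.
\item You must also check that a new $(d+2)$-dimensional member cannot be a middle element of a new $D_s$. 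If it were, the forcing $B_1,\ldots,B_s$ below it would give an old copy with the same top.
\end{itemize}
With this lemma and its dual, the compression alone pushes $\F$ into two middle layers (three for $n$ even and $s=q+1$). No separate third-layer estimate is needed.

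\textbf{Your heuristic also misidentifies what separates the cases.} For $s\le q$, at least $t\ge2$ of the $q+1$ middle elements of every $\F$-interval of length $2$ must be missing, not merely \emph{at least one}. With only one missing element, which is exactly the case $s=q+1$, the central step $d=m-1$ for $n=2m$ would require $(q+1)[m]_q\le[m+1]_q=q[m]_q+1$, and this is false. So an argument that exploits only a $\frac{1}{q+1}$ proportion of missing middle elements cannot settle even $n$ for $s\le q$.

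The paper also notes that the Boolean diamond statement fails. There, length-$2$ intervals have two middle elements, so effectively $t=1$. Any Lubell-type argument you devise must therefore genuinely use $t\ge 2$ together with the exact $q$-binomial layer ratios. A claim that the loss \emph{should pay for any third layer} is not enough.
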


Note that the case $s=2$ of the diamond poset $\Diamond=D_2$ is covered by Theorem \ref{thm:diamond-positive} for any value of $q$ and $n$. This result was first conjectured by Sarkis and Shahriari in \cite{SS}.

The proof of Theorem~\ref{thm:diamond-positive} is in the spirit of Sperner's original proof of the upper bound for the maximum size of an antichain in $B_n$: it starts with a maximum size $D_s$-free family $\F$, and as long as $\F$ contains subspaces of too small (large) dimension, one applies a Hall condition argument to replace members of $\mathcal L_k\cap \F$ with members of $(\mathcal L_{k+1}\cup \mathcal L_{k+2})\setminus \F$ where $k$ is the minimum dimension a member of $\F$ has.

\section{A two-level $P_q$-free family}

Fix a subspace $W\leq V$ of dimension
$n-\lfloor \frac{n}{2}\rfloor-1$.
Consider the family
\[
 \F=
 \Gr{V}{\lfloor \frac{n}{2}\rfloor}
 \;\cup\;
 \left\{U\in \Gr{V}{\lfloor \frac{n}{2}\rfloor+1}:U\cap W=\{0\}\right\}.
\]
The next theorem immediately implies Theorem~\ref{conjfalse}.

\begin{theorem}\label{twolayercounter}
For $n\geq 2$, the family $\F$ contains no weak copy of $P_q$.  Moreover,
\[
 |\F|=\Gr{n}{\lfloor \frac{n}{2}\rfloor}+q^{(\lfloor \frac{n}{2}\rfloor+1)(n-\lfloor \frac{n}{2}\rfloor-1)}.
\]
Consequently, if $q$ is fixed and $n\to\infty$, then
\[
 \frac{|\F|}{\Gr{n}{\lfloor \frac{n}{2}\rfloor}}
 =
 \begin{cases}
  1+\dfrac{c_q}{q}+o(1),& n\text{ even},\\[7pt]
  1+c_q+o(1),& n\text{ odd},
 \end{cases}
 \qquad
 c_q:=\displaystyle\prod_{i=1}^{\infty}(1-q^{-i}).
\]
\end{theorem}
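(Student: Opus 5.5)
The plan is to use the two-level structure of $\F$ to pin down what a weak copy of $P_q$ must look like, and then reach a contradiction by a dichotomy on the images of the vertices followed by a pigeonhole argument on a projective line. The size formula and the asymptotics are then standard $q$-counting. Throughout write $k=\lfloor \frac n2\rfloor$, so $\dim W=n-k-1$.

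\emph{Forcing the shape of a copy.} Suppose $\iota$ is a weak copy of $P_q=I_{K_{q+2}}$ in $\F$. Every vertex $v$ lies strictly below some edge, and $\F\subseteq\mathcal L_k\cup\mathcal L_{k+1}$. Hence each $A_v:=\iota(v)$ is a $k$-space, and each $U_e:=\iota(e)$ is a $(k+1)$-space with $U_e\cap W=\{0\}$. Since $\iota$ is injective, the $A_v$ are $q+2$ distinct $k$-spaces. For an edge $e=uv$ the $(k+1)$-space $U_e$ contains the distinct $k$-spaces $A_u,A_v$, so $U_e=A_u+A_v$. Consequently every two of the $A_v$ meet in a $(k-1)$-space.

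\emph{The dichotomy.} A family of at least three $k$-spaces that pairwise meet in codimension one either all contain a common $(k-1)$-space or all lie in a common $(k+1)$-space. To prove this, take $A_1,A_2$ and a third $A_3$. If $A_3\not\supseteq A_1\cap A_2$, then $\dim(A_3\cap(A_1\cap A_2))\le k-2$. Since $A_3$ meets both $A_1$ and $A_2$ in dimension $k-1$ inside $A_1+A_2$, this forces $A_3\subseteq A_1+A_2$. Extending this step to the whole family is routine.

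\emph{Contradiction in each case.} In the second case all sums $A_u+A_v$ equal the same $(k+1)$-space. But $K_{q+2}$ has at least three edges, which must have distinct images, a contradiction.

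In the first case let $C$ be the common $(k-1)$-space. Each $A_v$ lies in some $U_e$, so $A_v\cap W=\{0\}$ and in particular $C\cap W=\{0\}$. Pass to $V/W$, which has dimension $k+1$. The images $\bar A_v$ are $k$-dimensional, that is, hyperplanes of $V/W$, and all of them contain the $(k-1)$-dimensional image $\bar C$. The condition $(A_u+A_v)\cap W=\{0\}$ means that $A_u+A_v$ is a complement of $W$. This holds exactly when $\bar A_u\ne\bar A_v$. So the $q+2$ images $\bar A_v$ must be pairwise distinct hyperplanes of $V/W$ through $\bar C$. There are only $[2]_q=q+1$ such hyperplanes, so pigeonhole gives the contradiction. (When $k=1$ we have $C=\{0\}$ and the same argument applies.) I expect this step to be the main obstacle: one has to see that the dichotomy is needed, because without it there are many hyperplanes in $V/W$ and no pigeonhole is available.

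\emph{Counting and asymptotics.} The $(k+1)$-spaces complementary to the $(n-k-1)$-space $W$ correspond to graphs of linear maps from a fixed complement into $W$. There are therefore $q^{(k+1)(n-k-1)}$ of them, which gives the formula for $|\F|$. For the asymptotics, use $\Gr nk=(1+o(1))\,q^{k(n-k)}/c_q$ when $k,n-k\to\infty$. This yields
\[
\frac{|\F|}{\Gr nk}=1+(1+o(1))\,c_q\,q^{(k+1)(n-k-1)-k(n-k)}=1+(1+o(1))\,c_q\,q^{n-2k-1}.
\]
The exponent $n-2k-1$ equals $-1$ for even $n$ and $0$ for odd $n$, which gives the two cases stated in the theorem.
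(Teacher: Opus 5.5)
Your proof is correct and is essentially the paper's argument. Your dichotomy step is the content of Lemma~\ref{lem:star}; the paper applies it directly to $A_1,A_i,A_j$, using that the $B_{1i}$ are distinct, rather than first proving the global dichotomy and then excluding the common $(k+1)$-space case. Likewise, your pigeonhole over hyperplanes of $V/W$ through $\bar C$ is the paper's count of the $q+1$ lines in the two-dimensional space $V/(S+W)$, and the complement count and asymptotics coincide with the paper's.
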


We first state a lemma that states in a somewhat technical way that if three $d$-subspaces have pairwise intersections all of dimension $d-1$, then either all of them are contained in a $(d+1)$-space or they contain the same $(d-1)$-space.

\begin{lemma}\label{lem:star}
Let $A,X,Y$ be distinct $\lfloor \frac{n}{2}\rfloor$-dimensional subspaces such that every two
of them have $(\lfloor \frac{n}{2}\rfloor-1)$-dimensional intersection.  If
$ A+X\neq A+Y$,
then $A\cap X=A\cap Y$.
\end{lemma}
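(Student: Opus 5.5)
Write $k=\lfloor \frac{n}{2}\rfloor$ and argue by contraposition: assuming $A\cap X\neq A\cap Y$, we show $A+X=A+Y$. The only tool is the dimension formula $\dim(U+U')=\dim U+\dim U'-\dim(U\cap U')$. Since the pairwise intersections have dimension $k-1$, each of $A+X$, $A+Y$, $X+Y$ is a $(k+1)$-dimensional space.

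\textbf{Step 1.} Put $I=A\cap X$ and $J=A\cap Y$. These are two $(k-1)$-dimensional subspaces of the $k$-dimensional space $A$, and by assumption $I\neq J$. Then $I+J$ is strictly larger than $I$ and lies in $A$, so $I+J=A$. By the dimension formula, $\dim(I\cap J)=k-2$.

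\textbf{Step 2.} We show that $X\cap Y\not\subseteq A$. Suppose instead that $X\cap Y\subseteq A$. Then $X\cap Y\subseteq A\cap X\cap Y=I\cap J$. The left side has dimension $k-1$ and the right side has dimension $k-2$, a contradiction. So there is a vector $y\in (X\cap Y)\setminus A$.

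\textbf{Step 3.} Because $y\notin A$, the space $A+\langle y\rangle$ has dimension $k+1$. It is contained in $A+X$, since $y\in X$, and also in $A+Y$, since $y\in Y$. Both $A+X$ and $A+Y$ have dimension $k+1$, so each equals $A+\langle y\rangle$. Hence $A+X=A+Y$, which contradicts the hypothesis and proves $A\cap X=A\cap Y$.

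The only step that needs any care is Step 2, which uses the hypothesis on $X\cap Y$ to produce a common vector outside $A$. The distinctness of the three spaces and the ambient dimension $n$ play no further role.
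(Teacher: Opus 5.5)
Your proof is correct and is essentially the paper's argument. Both rest on the fact that the distinct hyperplanes $A\cap X\neq A\cap Y$ of $A$ meet in dimension $\lfloor \frac{n}{2}\rfloor-2$, so the $(\lfloor \frac{n}{2}\rfloor-1)$-dimensional space $X\cap Y$ cannot lie in $A$. The paper phrases the remaining step with the quotient map $\rho:V\to V/A$ ($\rho(X)\neq\rho(Y)$ forces $X\cap Y\subseteq A$), whereas you run the contrapositive directly with a vector $y\in(X\cap Y)\setminus A$ and $A+X=A+\langle y\rangle=A+Y$, which is the same reasoning without the quotient.
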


\begin{proof}
The assertion is immediate when $\lfloor \frac{n}{2}\rfloor=1$, since both intersections are the
zero subspace.  Suppose that $\lfloor \frac{n}{2}\rfloor\geq2$, and set
$S_X=A\cap X$, $S_Y=A\cap Y$.
Assume for a contradiction that $S_X\neq S_Y$.  These are distinct
hyperplanes of $A$, so
$\dim(S_X\cap S_Y)=\lfloor \frac{n}{2}\rfloor-2$.

Let $\rho:V\to V/A$ be the quotient map.  Since $\dim(X\cap A)=\dim(Y\cap A)=\lfloor \frac{n}{2}\rfloor-1$, both $\rho(X)$ and $\rho(Y)$ are one-dimensional.
Furthermore, $A+X\neq A+Y$ if and only if  $\rho(X)\neq\rho(Y)$.
Thus $\rho(X)$ and $\rho(Y)$ are distinct one-dimensional subspaces,
and hence have zero intersection.  If $z\in X\cap Y$, then
$\rho(z)\in\rho(X)\cap\rho(Y)=\{0\}$, so $z\in A$.  It follows that
\[
 X\cap Y
 \subseteq A\cap X\cap Y
 =S_X\cap S_Y.
\]
The reverse inclusion is automatic, and therefore $\dim(X\cap Y)=\lfloor \frac{n}{2}\rfloor-2$,
contrary to the hypothesis that $\dim(X\cap Y)=\lfloor \frac{n}{2}\rfloor-1$.  Hence
$S_X=S_Y$.
\end{proof}

\begin{proof}[Proof of Theorem~\ref{twolayercounter}]
Suppose, for a contradiction, that $\F$ contains a weak copy of $P_q$.
Denote the images of the $q+2$ vertex-elements by $A_1,\ldots,A_{q+2}$
and the image of the edge $ij$ by $B_{ij}$.
Every vertex-element has an element strictly above it in $P_q$.
Since $\F$ occupies only dimensions $\lfloor \frac{n}{2}\rfloor$ and $\lfloor \frac{n}{2}\rfloor+1$, each $A_i$ must
therefore have dimension $\lfloor \frac{n}{2}\rfloor$, and every $B_{ij}$ must have dimension
$\lfloor \frac{n}{2}\rfloor+1$. For each $i<j$ we have $A_i,A_j\subset B_{ij}$.
The spaces $A_i$ and $A_j$ are distinct $\lfloor \frac{n}{2}\rfloor$-spaces contained in a
$(\lfloor \frac{n}{2}\rfloor+1)$-space, and hence $\dim(A_i\cap A_j)=\lfloor \frac{n}{2}\rfloor-1$, and $B_{ij}=A_i+A_j$.

The elements $B_{1i}$, $2\leq i\leq q+2$, represent distinct edges of
$K_{q+2}$, so they are distinct subspaces.  Thus, for distinct
$i,j\in\{2,\ldots,q+2\}$,
$A_1+A_i=B_{1i}\neq B_{1j}=A_1+A_j$.
Lemma~\ref{lem:star}, applied to $A_1,A_i,A_j$, shows that all the
spaces $A_1\cap A_i$ are equal.  Consequently, there is a fixed
$(\lfloor \frac{n}{2}\rfloor-1)$-dimensional subspace $S$ such that
$S\subset A_i$ for all $1\leq i\leq q+2$.

As every $B_{ij}\in \F$ must have dimension $\lfloor \frac{n}{2}\rfloor+1$, therefore
 $B_{ij}\cap W=\{0\}$.
Since $S\subset B_{ij}$, this also gives $S\cap W=\{0\}$.
Pass to the quotient space $X=V/S$
and let $\overline W=(W+S)/S\leq X$.
Because $S\cap W=\{0\}$, we have
 $\dim X=n-\lfloor \frac{n}{2}\rfloor+1$,
and $\dim\overline W=\dim W=n-\lfloor \frac{n}{2}\rfloor-1$. Therefore
$\dim(X/\overline W)=2$.

For every $i$, the quotient $A_i/S$ is a one-dimensional subspace of
$X$.  For every pair $i<j$,
 $B_{ij}/S=(A_i/S)+(A_j/S)$
is two-dimensional.  We claim that
\[
 (B_{ij}/S)\cap\overline W=\{0\}.
\]
Indeed, if $b+S=w+S$ for some $b\in B_{ij}$ and $w\in W$, then
$b-w\in S\subset B_{ij}$, so $w\in B_{ij}\cap W=\{0\}$.  Thus the
common coset is the zero coset, proving the claim.

Let $\pi:X\rightarrow X/\overline W$
be the quotient map.  The preceding claim says that $\pi$ is injective
on every two-space $B_{ij}/S$.  It follows  that each
$\pi(A_i/S)$ is nonzero, and  that, for $i\neq j$, the two
one-dimensional spaces $\pi(A_i/S)$ and $\pi(A_j/S)$ are distinct:
otherwise the restriction of $\pi$ to their two-dimensional span
$B_{ij}/S$ could not be injective.  We have therefore obtained $q+2$
distinct one-dimensional subspaces of the two-dimensional vector space
$X/\overline W$. This is impossible, because a two-dimensional vector space over
$\mathbb F_q$ has exactly
$q+1$
one-dimensional subspaces.  This contradiction proves that $\F$ is
$P_q$-free.

\smallskip

It remains to count $\F$.  Its entire $\lfloor \frac{n}{2}\rfloor$-dimensional level contributes
$\Gr{n}{\lfloor \frac{n}{2}\rfloor}$.  The dimensions of $W$ and a member $U$ of the second
part add up to $n$.
Consequently, the selected $(\lfloor \frac{n}{2}\rfloor+1)$-spaces are exactly the complements of
$W$ in $V$.

Fix one complement $C$ of $W$, so
 $V=C\oplus W$ and $\dim C=\lfloor \frac{n}{2}\rfloor+1$.
For each linear map $f:C\to W$, define its graph by
\[
 \Gamma_f=\{c+f(c):c\in C\}\leq V.
\]
The map $c\mapsto c+f(c)$ is injective, so $\Gamma_f$ has dimension
$\lfloor \frac{n}{2}\rfloor+1$.  Also, if $c+f(c)\in W$, then the uniqueness of the decomposition
in $V=C\oplus W$ forces $c=0$, and hence $c+f(c)=0$.  Thus
$\Gamma_f\cap W=\{0\}$, and $\Gamma_f$ is a complement of $W$.

Conversely, let $U$ be any complement of $W$.  Let $p_C:V=C\oplus W\rightarrow C$
be the projection onto $C$ along $W$.  Its restriction
$p_C|_U:U\to C$ is injective because its kernel is $U\cap W=\{0\}$;
since $U$ and $C$ have the same dimension, it is an isomorphism.
Hence for every $c\in C$ there is a unique vector $u_c\in U$ whose
$C$-component is $c$.  Write it uniquely as
\[
 u_c=c+f_U(c),
 \qquad f_U(c)\in W.
\]
The map $f_U:C\to W$ is linear, because $U$ is a subspace and the two
coordinate projections are linear. By construction $U=\Gamma_{f_U}$.
This proves a bijection between the set of linear maps from $C$ to $W$ and the set of subspaces $U$ of $V$ with $V=U\oplus W$.

A linear map from a $(\lfloor \frac{n}{2}\rfloor+1)$-dimensional space to an
$(n-\lfloor \frac{n}{2}\rfloor-1)$-dimensional space is specified by
$(\lfloor \frac{n}{2}\rfloor+1)(n-\lfloor \frac{n}{2}\rfloor-1)$ independent field entries.  There are therefore
$q^{(\lfloor \frac{n}{2}\rfloor+1)(n-\lfloor \frac{n}{2}\rfloor-1)}$
such maps, and hence the same number of complements.  This proves
\[
 |\F|=\Gr{n}{\lfloor \frac{n}{2}\rfloor}+q^{(\lfloor \frac{n}{2}\rfloor+1)(n-\lfloor \frac{n}{2}\rfloor-1)}.
\]

Finally, the product formula for the Gaussian binomial coefficient gives
\[
 \Gr{n}{\lfloor \frac{n}{2}\rfloor}
 =q^{\lfloor \frac{n}{2}\rfloor(n-\lfloor \frac{n}{2}\rfloor)}
 \frac{\displaystyle\prod_{j=n-\lfloor \frac{n}{2}\rfloor+1}^{n}(1-q^{-j})}
      {\displaystyle\prod_{i=1}^{\lfloor \frac{n}{2}\rfloor}(1-q^{-i})}.
\]
Since $(\lfloor \frac{n}{2}\rfloor+1)(n-\lfloor \frac{n}{2}\rfloor-1)-\lfloor \frac{n}{2}\rfloor(n-\lfloor \frac{n}{2}\rfloor)=n-2\lfloor \frac{n}{2}\rfloor-1$,
we obtain the exact identity
\[
 \frac{|\F|}{\Gr{n}{\lfloor \frac{n}{2}\rfloor}}
 =1+q^{n-2\lfloor \frac{n}{2}\rfloor-1}
 \frac{\displaystyle\prod_{i=1}^{\lfloor \frac{n}{2}\rfloor}(1-q^{-i})}
      {\displaystyle\prod_{j=n-\lfloor \frac{n}{2}\rfloor+1}^{n}(1-q^{-j})}.
\]
For fixed $q$, as $n\to\infty$, the numerator product tends to
$c_q$ and the denominator product tends to $1$.  Moreover,
\[
 n-2\lfloor \frac{n}{2}\rfloor-1=
 \begin{cases}
  -1,&n\text{ even},\\
  0,&n\text{ odd}.
 \end{cases}
\]
\end{proof}

\section{Forbidding a smaller subspace lattice}

In this section, we prove Theorem \ref{thm:diamond-positive} and Theorem \ref{thm:subspace-lattice-opposite-parity}. We start with the proof of the former. The  core of the proof is the following lemma, which allows for a 'push-to-the-middle' argument.

\begin{lemma}
\label{lem:up-compression}
Let $2\leq s\leq q+1$, let $\F\subseteq L_n(q)$ be a nonempty strong
$D_s$-free family, let $d$ be the minimum dimension of a member of
$\F$, and put $t=q+2-s$.
Suppose that
\begin{equation}
 d\leq\frac{n-2}{2}
 \qquad\text{and}\qquad
 (q+1)[d+1]_q\leq t[n-d]_q.
 \label{eq:compression-hypotheses}
\end{equation}
Then there is a strong $D_s$-free family $\F'$ such that $|\F'|=|\F|$ and $\min\{\dim U:U\in\F'\}\geq d+1$.
\end{lemma}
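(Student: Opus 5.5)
The plan is a Sperner-style shifting argument: delete the bottom layer $\mathcal A=\F\cap\mathcal L_d$ and insert, through a Hall matching, the same number of new subspaces of dimension $d+1$ (or $d+2$), chosen so that the new family stays strong $D_s$-free. Since $d$ is the minimum dimension in $\F$, every $U\in\mathcal A$ is a minimal element of $\F$.

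\emph{Local structure above $U\in\mathcal A$.} Fix a $(d+2)$-space $C\supset U$. The interval $[U,C]$ is isomorphic to $L_2(q)=D_{q+1}$, so it contains exactly $q+1$ spaces of dimension $d+1$, and these are pairwise incomparable.
\begin{itemize}
\item If $C\in\F$, then at most $s-1$ of these $(d+1)$-spaces lie in $\F$. Otherwise $U$, $C$ and $s$ of them would form a strong copy of $D_s$. Hence at least $t=q+2-s$ of them are missing from $\F$.
\item If $C\notin\F$, then $C$ itself can serve as a candidate replacement for $U$.
\end{itemize}

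\emph{The bipartite graph.} Put $\mathcal A$ on the left. On the right put:
\begin{itemize}
\item all $(d+1)$-spaces $X\notin\F$ that contain a member of $\mathcal A$;
\item all $(d+2)$-spaces $C\notin\F$ that contain a member of $\mathcal A$, taken only in the case that every $(d+1)$-space above the corresponding $U$ lies in $\F$.
\end{itemize}
Join $U$ to the candidates above it.

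\emph{Degree bounds.} I will double count the pairs $(X,C)$ with $U<X<C$, $\dim X=d+1$, $\dim C=d+2$.
\begin{itemize}
\item There are $[n-d]_q$ spaces $X$ above $U$.
\item Each $X$ lies in $[n-d-1]_q$ spaces $C$, and each $C$ contains $q+1$ spaces $X$ above $U$.
\item Combining this with the local structure above shows that $U$ has at least $t[n-d]_q/(q+1)$ right-neighbours.
\end{itemize}
On the right, a $(d+1)$-space contains only $[d+1]_q$ subspaces of dimension $d$, so its degree is at most $[d+1]_q$. For $(d+2)$-candidates, the constraint that all $(d+1)$-spaces between them and $U$ lie in $\F$ should bring the effective degree down to the same bound; this is where $d\le (n-2)/2$ enters, guaranteeing room above level $d+1$. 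Hypothesis \eqref{eq:compression-hypotheses} says exactly that the minimum left degree is at least the maximum right degree, so Hall's condition holds and $\mathcal A$ has a matching $U\mapsto\phi(U)$. Set $\F'=(\F\setminus\mathcal A)\cup\phi(\mathcal A)$. Then $|\F'|=|\F|$, and every member of $\F'$ has dimension at least $d+1$.

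\emph{Freeness of $\F'$ — the main obstacle.} I expect this step to be the hardest. Take a strong copy of $D_s$ in $\F'$ that uses new elements, and try to push it down to a copy in $\F$.
\begin{itemize}
\item If a new element $\phi(U)$ is the bottom of the copy, replace it by $U\subset\phi(U)$. This gives a copy whose middle elements and top lie strictly above $U$.
\item Trouble arises when new elements occur as middle elements or as the top, or when several new elements appear at once. Then the pulled-back configuration need not lie in $\F$.
\end{itemize}
My first attempt is to restrict the candidate set further. I would require each candidate $X$ to be \emph{safe}: $X$ must not be the top of an $s$-element antichain of $\F$-members above a common element. If needed, I would also require that $X$ together with $s-1$ spaces of $\F$ cannot close a diamond. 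Then I would recheck that the counting still yields left degree at least $t[n-d]_q/(q+1)$.

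If this restriction fails, the fallback is to process one $(d+2)$-space $C$ at a time. Within $[U,C]$ we add only missing $(d+1)$-spaces, so that fewer than $s$ spaces strictly between $U$ and $C$ lie in $\F'$. Since new elements all have dimension $d+1$, two of them are incomparable unless they coincide, which limits how they can interact in a copy of $D_s$.
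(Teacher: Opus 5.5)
\textbf{There are genuine gaps, in both the Hall step and the freeness step.}

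\textbf{The Hall step.} The claim that hypothesis \eqref{eq:compression-hypotheses} gives ``minimum left degree $\geq$ maximum right degree'' is false for the $(d+2)$-candidates. Take $\F=\mathcal L_d\cup\mathcal L_{d+1}$. Under either reading of your rule, every $(d+2)$-space is a candidate for all $\Gr{d+2}{2}$ of its $d$-subspaces. That number exceeds $[d+1]_q$ once $d\geq1$, so no ``effective degree'' argument can bring it down to $[d+1]_q$. The two kinds of candidates have to be weighted differently.

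Your left-degree bound $t[n-d]_q/(q+1)$ also has a problem. It needs every non-candidate $C\supset U$ to have at least $t$ missing $(d+1)$-spaces in $[U,C]$. That holds if every $C\notin\F$ is a candidate, as in your ``local structure'', but that rule would break freeness. It fails, for $t\geq2$, under the restricted rule you actually use (all $q+1$ intermediate spaces in $\F$). A $C\notin\F$ with between $s$ and $q$ intermediate members of $\F$ is then neither a candidate nor forced to have $t$ gaps.

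The paper resolves both issues with one threshold. $C$ is a candidate for $A$ iff at least $s$ members of $\F\cap\mathcal L_{d+1}$ lie strictly between $A$ and $C$; such a $C$ is automatically outside $\F$. Hall's condition is then checked for each $\mathcal A\subseteq\F\cap\mathcal L_d$ by counting triples $A<B<C$ with $A\in\mathcal A$, $B\in\F\cap\mathcal L_{d+1}$, $\dim C=d+2$:
\begin{itemize}
\item The missing $B$'s give $T\geq(a[n-d]_q-x[d+1]_q)[n-d-1]_q$.
\item Only candidate $C$'s can exceed $s-1$ intermediate members, so $T\leq(s-1)a\Gr{n-d}{2}+t\Gr{d+2}{2}y$.
\item Combining these with $[n-d]_q[n-d-1]_q=(q+1)\Gr{n-d}{2}$, the second hypothesis, and $\Gr{d+2}{2}\leq\Gr{n-d}{2}$ yields $a\leq x+y$. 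The last comparison is the actual role of $d\leq(n-2)/2$.
\end{itemize}

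\textbf{The freeness step.} You leave this open, but with the above threshold it is short and needs none of your proposed ``safe'' restrictions or one-$C$-at-a-time processing.
\begin{itemize}
\item The top of a $D_s$ in $\F'$ has dimension at least $d+3$, so it is old.
\item A new $(d+1)$-space cannot be a middle element, since nothing of dimension at most $d$ remains to serve as the bottom.
\item A new $(d+2)$-space $C=\phi(A)$ cannot be a middle element below a top $U$. Otherwise $A<B_1,\dots,B_s<C<U$ would make $A,B_1,\dots,B_s,U$ a strong $D_s$ in $\F$.
\end{itemize}
Hence only the bottom can be new, and replacing it by its preimage $A$ gives a copy in $\F$. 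The scenario of several new elements at once therefore never arises.
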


\begin{proof}
Put $\F_d=\F\cap\mathcal L_d$.  For each $A\in\F_d$, define two
sets of possible replacements:
\begin{align*}
 X_A={}&\{B\in\mathcal L_{d+1}:A<B,\ B\notin\F\},\\
 Y_A={}&\bigl\{C\in\mathcal L_{d+2}:A<C\text{ and there are distinct }
 B_1,\ldots,B_s\in\F\cap\mathcal L_{d+1}\\
 &\hspace{43mm}\text{such that }A<B_i<C\text{ for every }i\bigr\}.
\end{align*}
Every member of $X_A$ is absent from $\F$ by definition.  Every member
$C$ of $Y_A$ is absent as well: otherwise
$A,B_1,\ldots,B_s,C$ would form a copy of $D_s$ in $\F$.

Make a bipartite graph whose left vertex class is $\F_d$, whose right
vertex class is
\[
 \bigcup_{A\in\F_d}(X_A\cup Y_A),
\]
and in which $A$ is adjacent precisely to the members of
$X_A\cup Y_A$.  We verify Hall's condition.  Fix
$\mathcal A\subseteq\F_d$, and write
\[
a=|\mathcal A|,
 \quad
 X=\bigcup_{A\in\mathcal A}X_A,
 \quad x=|X|,
 \quad
 Y=\bigcup_{A\in\mathcal A}Y_A,
 \quad y=|Y|.
\]
Note that $X$ and $Y$ are disjoint because they lie in different
layers.  We must prove $a\leq x+y$.

Count triples $A<B<C$ satisfying
$ A\in\mathcal A$, $B\in\F\cap\mathcal L_{d+1}$, $C\in\mathcal L_{d+2}$,
and denote their number by $T$.  For every $A\in\mathcal A$ there are
$[n-d]_q$ members of $\mathcal L_{d+1}$ containing $A$.  If such a
member is missing from $\F$, it belongs to $X_A$.  A fixed
$B\in X$ contains at most
$[d+1]_q$ members of $\mathcal A$.  Hence the number of pairs $(A,B)$ with
$A\in\mathcal A$, $B\in\F\cap\mathcal L_{d+1}$, and $A<B$ is at
least $a[n-d]_q-x[d+1]_q$.
Each such pair extends to exactly $[n-d-1]_q$ choices for $C$, so
\begin{equation}
 T\geq
 \bigl(a[n-d]_q-x[d+1]_q\bigr)[n-d-1]_q.
 \label{eq:T-lower}
\end{equation}

For a fixed pair $A<C$ with dimensions $d$ and $d+2$, respectively,
the interval $[A,C]$ contains exactly
$q+1=(s-1)+t$
subspaces of dimension $d+1$.  If $C\notin Y$, at most $s-1$ of
these subspaces belong to $\F$; otherwise $C$ would lie in $Y_A$.
If $C\in Y$, there are at most $q+1$ such subspaces, so the possible excess over
$s-1$ is at most $t$.  There are $\Gr{n-d}{2}$ choices for $C$ above a fixed
$A$, while a fixed $C\in\mathcal L_{d+2}$ contains at most $\Gr{d+2}{d}$
members of $\mathcal A$.  Therefore
\begin{equation}
 T\leq (s-1)a\Gr{n-d}{2}+t\Gr{d+2}{2}y.
 \label{eq:T-upper}
\end{equation}

Since $[n-d]_q[n-d-1]_q=(q+1)\Gr{n-d}{2}$,
combining \eqref{eq:T-lower} and \eqref{eq:T-upper} gives
\begin{equation}
 ta\Gr{n-d}{2}\leq [d+1]_q[n-d-1]_q x+t\Gr{d+2}{2}y.
 \label{eq:hall-intermediate}
\end{equation}
The first condition in \eqref{eq:compression-hypotheses} gives
$d+2\leq n-d$, and hence $\Gr{d+2}{2}\leq \Gr{n-d}{2}$.  The second condition in \eqref{eq:compression-hypotheses}, together
with $\Gr{n-d}{2}=\frac{[n-d]_q[n-d-1]_q}{q+1}$,
gives
\[
 [d+1]_q[n-d-1]_q\leq t\Gr{n-d}{2}.
\]
Thus \eqref{eq:hall-intermediate} implies
\[
 ta\Gr{n-d}{2}\leq t\Gr{n-d}{2}(x+y),
\]
and consequently $a\leq x+y$.  Hall's theorem now supplies an
injection $\phi$ from $\F_d$ to
$\bigcup_{A\in\F_d}(X_A\cup Y_A)$
such that $\phi(A)\in X_A\cup Y_A$ for every $A\in\F_d$.  Define
 $\F'=(\F\setminus\F_d)\cup\phi(\F_d)$.
All the images are new and $\phi$ is injective, so $|\F'|=|\F|$;
also every member of $\F'$ has dimension at least $d+1$.

It remains to check that the simultaneous replacement has not created
a generalized diamond.  Suppose that
 $L<M_1,\ldots,M_s<U$
is a copy of $D_s$ in $\F'$.  Since the minimum dimension in $\F'$ is
at least $d+1$, we have $\dim U\geq d+3$.  Every new member has
dimension $d+1$ or $d+2$, so $U$ is an old member of $\F$.

A new member of dimension $d+1$ cannot be one of the $M_i$, because
there is no member of $\F'$ of smaller dimension to serve as $L$.
Suppose that a new member $C=\phi(A)\in Y_A$ of dimension $d+2$ is
one of the $M_i$.  By the definition of $Y_A$, there are distinct
$B_1,\ldots,B_s\in\F\cap\mathcal L_{d+1}$ with $A<B_j<C<U$ for all $1\leq j\leq s$.
Then $A,B_1,\ldots,B_s,U$ is a strong copy of $D_s$ in the original family
$\F$, a contradiction.  Thus no new member is a middle element, and
we already know that no new member is the top element.  If the alleged
copy used no new member, it would already lie in $\F$.  Otherwise its
only new member is the bottom, say $L=\phi(A)$.  Since
$A<L<M_i<U$ for every $i$, replacing $L$ by $A$ again produces a strong
copy of $D_s$ in $\F$.  This final contradiction proves that $\F'$ is
strong $D_s$-free.
\end{proof}

We shall also use the order-dual form of the lemma. Since $D_s$ is self-dual, applying
Lemma~\ref{lem:up-compression} to
$\{U^\perp:U\in\F\}$ gives a downward compression: if $e$ is the
maximum occupied dimension and $d=n-e$ satisfies
\eqref{eq:compression-hypotheses}, all members of dimension $e$ may be
replaced, without changing the size or creating a $D_s$, by absent
members of dimensions $e-1$ and $e-2$.

\begin{proof}[Proof of Theorem~\ref{thm:diamond-positive}]
The lower bound is immediate.  The union of any two layers contains no
chain of three distinct elements, whereas every copy of $D_s$ contains
a chain $a<b_i<c$.  Hence the union of two middle layers is $D_s$-free
and has size $\Sigmaq(n,2)$.

We first determine when the numerical hypothesis of
Lemma~\ref{lem:up-compression} is automatic.  If $2\leq s\leq q$,
then $t=q+2-s\geq2$.  Whenever $d\leq(n-2)/2$, we have
$n-d\geq d+2$, and
\begin{align*}
 2[n-d]_q
 \geq2[d+2]_q
 >(q+1)[d+1]_q,
\end{align*}
because
\[
 2[d+2]_q-(q+1)[d+1]_q
 =(q-1)[d+1]_q+2>0.
\]
Thus both hypotheses of the compression lemma hold throughout this
range.

If $s=q+1$, then $t=1$.  Whenever $d\leq(n-3)/2$, we have
$n-d\geq d+3$, and
\begin{align*}
 [n-d]_q
\geq[d+3]_q>(q+1)[d+1]_q,
\end{align*}
since
\[
 [d+3]_q-(q+1)[d+1]_q
 =(q^2-q-1)[d+1]_q+q+1>0.
\]

Now let $n=2m+1$ be odd and let $2\leq s\leq q+1$.  Starting with a
maximum strong $D_s$-free family, repeatedly apply the upward compression
while its minimum occupied dimension is less than $m$.  The relevant
integer $d$ is then at most $m-1=(n-3)/2$, so the preceding estimates
apply, including when $s=q+1$.  We obtain a maximum family whose
minimum dimension is at least $m$.  Next apply the dual downward
compression while the maximum occupied dimension is greater than
$m+1$.  If that maximum is $e\geq m+2$, then the dual minimum
dimension $n-e$ is at most $m-1$, so the same estimate applies.  The
last downward step can introduce only dimensions $m$ and $m+1$, and
therefore does not destroy the lower bound on the minimum dimension.
The resulting maximum family is contained in
 $\mathcal L_m\cup\mathcal L_{m+1}$.
Consequently its size is at most $\Sigmaq(n,2)$, proving equality for
odd $n$ throughout $2\leq s\leq q+1$.

Let $n=2m$ be even and $2\leq s\leq q$.  Repeated upward compression
while the minimum dimension is less than $m$ is valid because then
$d\leq m-1=(n-2)/2$.  It produces a maximum family with minimum
dimension at least $m$.  Repeated dual downward compression while the
maximum dimension exceeds $m+1$ is also valid: for $e\geq m+2$, the
dual minimum $n-e$ is at most $m-2$.  Such a downward step introduces
only dimensions at least $m$, so the lower bound on the minimum is
preserved.  We finish with a maximum family contained in
 $\mathcal L_m\cup\mathcal L_{m+1}$,
which proves the desired upper bound and hence equality.

Finally, consider the remaining endpoint $n=2m$ and $s=q+1$.  The
endpoint estimate is valid for every $d\leq m-2$.  We may therefore
compress upward until the minimum occupied dimension is at least
$m-1$, and then compress downward until the maximum occupied dimension
is at most $m+1$.  The downward replacements have dimensions at least
$m-1$, so the first conclusion is preserved.  This proves that some
maximum family is supported on
$\mathcal L_{m-1}\cup\mathcal L_m\cup\mathcal L_{m+1}$.
\end{proof}
For $m\geq2$, the missing central compression in the $n=2m$ case would require
\[
 (q+1)[m]_q\leq[m+1]_q=q[m]_q+1,
\]
which is false because $[m]_q>1$.  Thus this argument alone does not
reduce the even endpoint to two layers.  

\bigskip

We now turn our attention to the proof of Theorem \ref{thm:subspace-lattice-opposite-parity}. The upper bound is obtained by giving a lower bound on the number of missing subspaces. This approach was used by Kleitman \cite{Kleitman68} and others \cite{FranklKupavskii17,FranklKupavskii19} to determine the maximum number of subsets that a family $\mathcal F\subseteq 2^{[n]}$ can contain if it contains no $r$ pairwise disjoint sets.

\begin{lemma}
\label{lem:rank-prescribed-Lqd}
Let $0\leq r_0<r_1<\cdots<r_d\leq n$, and let
$\F\subseteq L_n(q)$ contain no strong copy of $L_d(q)$.  Put
\[
 x_r=\frac{|\mathcal L_r\setminus\F|}{\Gr nr}
 \qquad(0\leq r\leq n).
\]
Then
\begin{equation}
 \sum_{i=0}^d \Gr di x_{r_i}\geq1.
 \label{eq:Lqd-rank-density}
\end{equation}
\end{lemma}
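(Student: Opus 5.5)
We will prove the lemma by averaging over random embeddings of $L_d(q)$ whose rank-$i$ elements land in $\mathcal L_{r_i}$. The aim is to set up a uniformly random strong copy of $L_d(q)$ in $L_n(q)$ in which each dimension-$i$ element is a uniformly distributed member of $\mathcal L_{r_i}$. A union bound then gives the inequality.

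First we build the copy. Fix a complete flag $\{0\}=F_0<F_1<\cdots<F_{d}$ of a $d$-dimensional space $E$. Choose a chain of subspaces $W_0<W_1<\cdots<W_d$ of $V$ with $\dim W_i=r_i$, whose gaps are $r_{i}-r_{i-1}\ge 1$. We want a lattice embedding $\iota:L_d(q)\to L_n(q)$ sending each $i$-dimensional subspace of $E$ to an $r_i$-dimensional subspace of $V$. A concrete choice: pick a direct sum decomposition $V=W_0\oplus U_1\oplus\cdots\oplus U_d\oplus R$ with $\dim U_j=r_j-r_{j-1}$. Inside each $U_j$ pick a nonzero vector $u_j$ and a complement $U_j=\langle u_j\rangle\oplus U_j'$. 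We cannot simply map a subspace $S\le \mathbb F_q^d$ to $W_0+\langle \sum_k s_k u_k : s\in S\rangle$, since that has the wrong dimension. Instead we use a "thickened" embedding. Set $\iota(S)=W_0\oplus \phi(S)\oplus \bigoplus_{j\le \dim S}U_j'$, where $\phi:\mathbb F_q^d\to\langle u_1,\dots,u_d\rangle$ is a linear isomorphism. This has dimension $r_0+\dim S+\sum_{j\le\dim S}(r_j-r_{j-1}-1)=r_{\dim S}$. It is order-preserving and injective, and the order is reflected because $\phi(S)=\iota(S)\cap\langle u\rangle$-part. Hence it is a strong copy: if $\iota(S)\le\iota(T)$, then projecting to $\langle u_1,\dots,u_d\rangle$ along the other summands gives $\phi(S)\le\phi(T)$.

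Next we randomize. Apply a uniformly random element $g\in GL(V)$ to this fixed embedding. Since $GL(V)$ acts transitively on $\mathcal L_r$, each image $g\iota(S)$ with $\dim S=i$ is uniformly distributed on $\mathcal L_{r_i}$. The probability that it is missing from $\F$ is therefore exactly $x_{r_i}$.

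Finally we conclude. Since $\F$ contains no strong copy of $L_d(q)$, for every $g$ at least one of the $\sum_i\Gr di$ images lies outside $\F$. Taking expectations, $1\le \sum_{S}\Pr[g\iota(S)\notin\F]=\sum_{i=0}^d\Gr di x_{r_i}$.

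The only delicate step is checking that $\iota$ is a strong embedding, that is, that it reflects the order. Doing so with the projection argument above is the main point to verify carefully. Note that the $U_j'$ summands attached to $S$ depend only on $\dim S$, so they are nested consistently whenever $S\le T$.
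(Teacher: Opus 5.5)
Your proposal is correct and is essentially the paper's proof. Your $W_0$, $\langle u_1,\dots,u_d\rangle$ and $U_j'$ play the roles of the paper's $A$, $E$ and $X_j$ in the embedding $U\mapsto A\oplus X_1\oplus\cdots\oplus X_i\oplus U$, and you then average over a uniformly random element of $\operatorname{GL}(V)$ exactly as the paper does, with your projection argument for order-reflection simply spelling out what the paper attributes to ``the direct-sum decomposition.''
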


\begin{proof}
Choose subspaces $A,E,X_1,\ldots,X_d\le V$ of subspaces of  $V$ whose sum is direct   and whose dimensions satisfy
\[
 \dim A=r_0,
 \qquad
 \dim E=d,
 \qquad
 \dim X_i=r_i-r_{i-1}-1
 \quad(1\leq i\leq d).
\]
Their direct sum has dimension $r_d$ and hence can be chosen inside
$V$.  For an $i$-dimensional subspace $U\leq E$, define
\[
 \varphi(U)=A\oplus X_1\oplus\cdots\oplus X_i\oplus U,
\]
where the sum over the $X_j$ is empty when $i=0$.  Then $\dim\varphi(U)
 =r_0+\sum_{j=1}^i(r_j-r_{j-1}-1)+i
 =r_i$.
Moreover, the direct-sum decomposition shows that $\varphi$ is a poset-isomorphism and
thus $\varphi$ gives a strong copy of $L_d(q)$ whose rank-$i$
elements lie in $\mathcal L_{r_i}$.

Apply a uniformly random element of $\operatorname{GL}(V)$ to this
fixed copy.  As $\F$ is strong $L_d(q)$-free, each image contains at least one subspace outside
$\F$.  The group $\operatorname{GL}(V)$ is transitive on every level,
so each one of the $\Gr di$ rank-$i$ elements of the copy has
probability $x_{r_i}$ of being outside $\F$.  Taking the expectation
of the number of missing elements in the random copy gives
\eqref{eq:Lqd-rank-density}.
\end{proof}

By the assumption on the parities of $n$ and $d$, we can write $n=2a+d-1$
and put $c_j=a+j-1$ for all $1\leq j\leq d$. Set
$S=\sum_{r=0}^{a-1}\Gr{n}{r}$.
Since $\Gr{n}{r}=\Gr{n}{n-r}$ and $n-(a+d)=a-1$, the upper outer tail $\sum_{r=a+d}^{n}\Gr{n}{r}$ has also
 size $S$ and $|L_n(q)|=\Sigmaq(n,d)+2S$.

 \begin{lemma}\label{ineq}
 Let \(q\ge3\) be a prime power, and let \(d\ge2\). With $n$, $a$, \(c_j\) and \(S\) as above, we have
 \begin{equation}
 \Gr{n}{c_j}\geq S\left(\Gr{d}{j-1}+\Gr dj\right)
 \qquad(1\leq j\leq d).
 \label{eq:Lqd-coefficient-comparison}
\end{equation}
 \end{lemma}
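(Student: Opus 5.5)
Since the left side increases in $c_j$ up to the middle while $\Gr dj$ is symmetric, the plan is to reduce Lemma~\ref{ineq} to a single worst case and then do crude $q$-power estimates. By the symmetry $j\mapsto d+1-j$ we have $c_{d+1-j}=n-c_j$, so $\Gr{n}{c_{d+1-j}}=\Gr{n}{c_j}$. Also $\Gr{d}{d-j}+\Gr{d}{d+1-j}=\Gr{d}{j-1}+\Gr dj$. Hence it suffices to treat $j\le \lceil d/2\rceil$.

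The first step is to bound $S$ by its top term. For $r<a$ we have $\Gr{n}{r-1}/\Gr nr=[r]_q/[n-r+1]_q\le q^{-(n-2r+1)}\cdot\frac{q}{q-1}$ or similar. Since $n-2r+1\ge d+2\ge 4$, the terms decay geometrically with ratio at most $1/q^3$ or so. This gives $S\le \Gr{n}{a-1}\,(1+\epsilon_q)$ with, e.g., $S\le \frac{q}{q-1}\Gr n{a-1}$ comfortably.

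The second step is to compare $\Gr{n}{c_j}$ with $\Gr n{a-1}$ by telescoping:
\[
\frac{\Gr{n}{a+j-1}}{\Gr{n}{a-1}}=\prod_{k=0}^{j-1}\frac{[n-a+1-k]_q}{[a+k]_q}.
\]
Each factor is $[a+d-k]_q/[a+k]_q\ge q^{d-2k}\cdot(1-q^{-1})$, using $[m]_q/[\ell]_q\ge q^{m-\ell}(1-q^{-\ell})\ge q^{m-\ell}(1-1/q)$. Together these give
\[
\Gr{n}{c_j}\ge \Gr n{a-1}\,q^{j(d-j+1)}(1-q^{-1})^j\cdot(\text{finer corrections}).
\]
On the other side I will use the standard bound $\Gr{d}{i}\le q^{i(d-i)}\prod_{k\ge1}(1-q^{-k})^{-1}$. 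Then $\Gr d{j-1}+\Gr dj\le C_q\,(q^{(j-1)(d-j+1)}+q^{j(d-j)})$. Both exponents are at most $j(d-j+1)-\min(j,d-j+1)$, and with $j\le d-j+1$ the deficit is at least $j-1$ for the first exponent and exactly $j$ for the second. The target then reduces to an inequality of the form
\[
(1-q^{-1})^j\ge \frac{q}{q-1}\,C_q\,(q^{-(d-j+1)}+q^{-j}).
\]

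The main obstacle is that the crude constants, with losses $(1-1/q)^j$ against gains $q^{-j}$, do not obviously close, especially for $j=1$ and for $q=3$, where the margin is thin. This is presumably why $q\ge3$ is needed. For $j=1$ the claim reads $\Gr{n}{a}\ge S(1+[d]_q)$, and I will check it directly. We have $\Gr na/\Gr n{a-1}=[a+d]_q/[a]_q\ge q^d$ roughly, versus $(1+[d]_q)\cdot\frac{q}{q-1}\approx q^{d}/(q-1)^2\cdot q$. For $q\ge3$ this holds with room, since the ratio is about $(q-1)^2/q\ge 4/3$. For $j\ge2$ I will avoid losing $(1-1/q)$ per factor. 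Instead I will use exact ratios of the form $[a+d-k]_q/[a+k]_q\ge q^{d-2k}$, which holds exactly because $[m]_q\ge q^{m-\ell}[\ell]_q$. This gives $\Gr n{c_j}\ge q^{j(d-j+1)}\Gr n{a-1}$ with no loss, and I will bound $\Gr dj/q^{j(d-j)}$ by $\prod(1-q^{-k})^{-1}\le 1/c_3<2$ for $q\ge3$. The residual inequality is then $1\ge \frac{q}{q-1}\cdot 2\,(q^{-j}+q^{-(d-j+1)})$. For $j\ge2$ and $q\ge3$ this is $\le 3\cdot 2/9\cdot\ldots$, which needs a careful but finite check in the small cases $d\le3$, $q=3$. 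I would verify those by direct computation.
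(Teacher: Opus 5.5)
Your proof is correct. It uses the same skeleton as the paper: the reflection $j\mapsto d+1-j$, a geometric-series bound of $S$ by its top term $\Gr{n}{a-1}$, the loss-free telescoping estimate $\Gr{n}{c_j}\ge q^{j(d+1-j)}\Gr{n}{a-1}$ from $[m]_q\ge q^{m-\ell}[\ell]_q$, and $\Gr{d}{i}\le\gamma_q q^{i(d-i)}$ with $\gamma_q=\prod_{h\ge1}(1-q^{-h})^{-1}$.

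The only real difference is the endgame.

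\textbf{The paper's route.} It keeps sharp constants, $S<\Gr{n}{a-1}/(1-q^{-(d+2)})$ and $\gamma_q\le\frac{q-1}{q-2}$. It then bounds $q^{-t}+q^{-(d+1-t)}\le q^{-1}+q^{-d}$, so the single inequality $\frac{q-1}{q-2}(q^{-1}+q^{-2})<1-q^{-4}$ handles every $j$ at once.

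\textbf{Your route.} You use the looser $S\le\frac{q}{q-1}\Gr{n}{a-1}$, and your generic bound then genuinely fails at $j=1$ when $q=3$. You repair this by using the exact value $\Gr{d}{0}+\Gr{d}{1}=1+[d]_q$ there. The $j=1$ case then reduces to $q^d(q-1)^2\ge q(q^d+q-2)$, i.e.\ $q^d(q^2-3q+1)\ge q(q-2)$, which holds for $q\ge3$; this is exactly where $q\ge3$ enters. So you trade a case split for cruder, easier constants with more slack elsewhere.

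Two loose ends in your write-up close immediately:
\begin{enumerate}
\item For $j\ge2$ no direct computation in small cases is needed. Since $d+1-j\ge j\ge2$, you get $\frac{q}{q-1}\cdot 2\,(q^{-j}+q^{-(d+1-j)})\le\frac{4}{q(q-1)}\le\frac23$.
\item The numerical claim $1/c_3<2$ should be justified. For example, $\prod_{h\ge1}(1-q^{-h})\ge 1-\sum_{h\ge1}q^{-h}=\frac{q-2}{q-1}$ gives $\gamma_q\le 2$ for $q\ge3$, and the non-strict bound suffices given the slack above.
\end{enumerate}
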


\begin{proof}
 For $1\leq r\leq a-1$,
\[
 \frac{\Gr{n}{r-1}}{\Gr nr}
 =\frac{q^r-1}{q^{n-r+1}-1}
 <q^{2r-n-1}
 \leq q^{-(d+2)}.
\]
Consequently,
\begin{equation}
 S<\frac{\Gr{n}{a-1}}{1-q^{-(d+2)}}.
 \label{eq:Lqd-tail-estimate}
\end{equation}

Fix $j$, and set $t=\min\{j,d+1-j\}$.
Reflection about the middle rank gives
\[
 \Gr{n}{c_j}=\Gr{n}{c_t},
 \qquad
 \Gr{d}{j-1}+\Gr{d}{j}=\Gr{d}{t-1}+\Gr dt,
\]
and $t\leq(d+1)/2$. We have
\begin{align*}
 \frac{\Gr{n}{c_t}}{\Gr{n}{a-1}}
 =\prod_{u=0}^{t-1}
   \frac{q^{n-(a+u)+1}-1}{q^{a+u}-1}>\prod_{u=0}^{t-1}q^{n-2(a+u)+1}
 =q^{\sum_{u=0}^{t-1}(d-2u)}
 =q^{t(d+1-t)}.
\end{align*}
Together with \eqref{eq:Lqd-tail-estimate}, this yields
\begin{equation}
 \frac{\Gr{n}{c_j}}S
 >\bigl(1-q^{-(d+2)}\bigr)q^{t(d+1-t)}.
 \label{eq:Lqd-central-lower}
\end{equation}

Define
\[
 \gamma_q=\prod_{h=1}^{\infty}(1-q^{-h})^{-1}.
\]
The product formula for Gaussian coefficients gives, for every
$0\leq i\leq d$,
\begin{equation}
 \Gr di<\gamma_q q^{i(d-i)}.
 \label{eq:Lqd-Gaussian-upper}
\end{equation}
Hence
\begin{align}
 \Gr{d}{t-1}+\Gr{d}{t}
 <\gamma_q\left(
 q^{(t-1)(d-t+1)}+q^{t(d-t)}\right)=\gamma_q q^{t(d+1-t)}
 \left(q^{-(d-t+1)}+q^{-t}\right)\leq\gamma_q q^{t(d+1-t)}(q^{-1}+q^{-d}).
 \label{eq:Lqd-middle-coefficient-upper}
\end{align}
Here the last inequality follows because the symmetric expression
$q^{-t}+q^{-(d+1-t)}$ is maximized at an endpoint of
$1\leq t\leq d$.

For nonnegative numbers whose sum is at most $1$, the product of
$1$ minus those numbers is at least $1$ minus their sum.  Therefore,
for $q>2$,
\[
 \prod_{h=1}^{\infty}(1-q^{-h})
 \geq1-\sum_{h=1}^{\infty}q^{-h}
 =\frac{q-2}{q-1},
\]
and hence $\gamma_q\leq\frac{q-1}{q-2}$.
The function $f(q)=\frac{q-1}{q-2}(q^{-1}+q^{-2})$ is decreasing and $g(q)=1-q^{-4}$ is increasing in $q$ for $q\ge 3$ while $f(3)<g(3)$. So if $q\geq3$ and $d\geq2$, it follows that
\begin{align*}
 \gamma_q(q^{-1}+q^{-d})
 \leq\frac{q-1}{q-2}(q^{-1}+q^{-2})<1-q^{-4}\leq1-q^{-(d+2)}.
\end{align*}  
Comparing
\eqref{eq:Lqd-central-lower} with
\eqref{eq:Lqd-middle-coefficient-upper} proves
\eqref{eq:Lqd-coefficient-comparison}.
\end{proof}

\begin{proof}[Proof of Theorem~\ref{thm:subspace-lattice-opposite-parity}]
The union of the middle $d$ layers of $L_n(q)$ is strong $L_d(q)$-free. This gives the lower bound.

For the upper bound, let $\F\subseteq L_n(q)$ be strong $L_d(q)$-free and
define the missing densities $x_r$ as in
Lemma~\ref{lem:rank-prescribed-Lqd}.  
For every $r<a$, apply \eqref{eq:Lqd-rank-density} to the sequence $r<c_1<\ldots<c_d$.
Since $\Gr d0=1$, this gives
\[
 x_r+\sum_{j=1}^d \Gr{d}{j}x_{c_j}\geq1.
\]
Multiplying by $\Gr{n}{r}$ and summing over $0\leq r<a$, we obtain
\begin{equation}
 \sum_{r=0}^{a-1}\Gr{n}{r}x_r
 +S\sum_{j=1}^d \Gr{d}{j}x_{c_j}\geq S.
 \label{eq:Lqd-lower-tail}
\end{equation}
Similarly, for every $s\geq a+d$, apply
\eqref{eq:Lqd-rank-density} to $c_1<\ldots<c_d<s$.
Using $\Gr{d}{d}=1$, multiplying by $\Gr{n}{s}$, and summing gives
\begin{equation}
 S\sum_{j=1}^d \Gr{d}{j-1}x_{c_j}
 +\sum_{s=a+d}^{n}\Gr{n}{s}x_s\geq S.
 \label{eq:Lqd-upper-tail}
\end{equation}
Adding \eqref{eq:Lqd-lower-tail} and
\eqref{eq:Lqd-upper-tail}, we find
\begin{equation}
 \sum_{r\notin\{c_1,\ldots,c_d\}}\Gr{n}{r}x_r
 +S\sum_{j=1}^d\left(\Gr{d}{j-1}+\Gr dj\right)x_{c_j}
 \geq2S.
 \label{eq:Lqd-two-tails}
\end{equation}

By Lemma~\ref{ineq}, the second sum is at most $\sum_{j=1}^d\Gr{n}{c_j}x_{c_j}$. Thus
$|L_n(q)\setminus\F|=\sum_{r=0}^{n}\Gr{n}{r}x_r\ge 2S$ and so
 $|\F|
 \leq\sum_{j=1}^d\Gr{n}{c_j}
 =\Sigmaq(n,d)$.
\end{proof}

\noindent \textbf{Remark.} Observe that the proof of Lemma \ref{ineq} shows that (\ref{eq:Lqd-coefficient-comparison}) is a strict inequality and thus (\ref{eq:Lqd-two-tails}) is a strict inequality unless $x_{c_j}$ are all zeros. This shows the uniqueness of the extremal construction of the middle $d$ layers of $L_n(q)$. 

\bigskip

\noindent \textbf{AI declaration}: The construction and proofs of this manuscript are due to ChatGPT 5.6. The authors checked and rewrote the proofs and take full responsibility for the content of the manuscript.


\begin{thebibliography}{9}
\bibitem{AMP}
    M. Axenovich, R.R. Martin, B. Patk\'os, Extremal poset theory, in A. Gagarin, R. Behrend, M. Lettington, and I. Aliev (eds.), \textit{Surveys in Combinatorics 2026}, London Mathematical Society Lecture Note Series 505, Cambridge University Press: 31--74, 2026.
\bibitem{Bukh}
B.~Bukh,
Set families with a forbidden subposet,
\textit{Electronic Journal of Combinatorics} \textbf{16}(1) (2009), Paper~R142.
\bibitem{EIL}
    D. Ellis, M.R. Ivan, I. Leader, Tur\'an densities for daisies and hypercubes. \textit{Bulletin of the London Mathematical Society}, 56(12) (2024), 3838--3853.
\bibitem{FranklKupavskii17}
P.~Frankl and A.~Kupavskii,
\newblock Families with no $s$ pairwise disjoint sets,
\newblock {\em J. London Math. Soc.} {\bf 95} (2017), 875--894.

\bibitem{FranklKupavskii19}
P.~Frankl and A.~Kupavskii,
\newblock Families of sets with no matchings of sizes $3$ and $4$,
\newblock {\em European J. Combin.} {\bf 75} (2019), 123--135.
\bibitem{Gerbner}
D. Gerbner, The covering lemma and $q$-analogues of extremal set theory problems, \textit{Ars Mathematica Contemporanea} 24(1) 2024, Paper No~1.07.
\bibitem{GP}
D. Gerbner, B. Patk\'os, \textit{Extremal Finite Set Theory}, Chapman and Hall/CRC, Boca Raton, 2018.
\bibitem{GerbnerPatkos}
D. Gerbner, B. Patk\'os, A note on vertex Tur\'an problems in the Kneser cube, \textit{Graphs Combin.} \textbf{42} (2026), Paper No. 17, 11 pp.
\bibitem{GriggsLu}
J.R. Griggs, L. Lu, On families of subsets with a forbidden subposet, \textit{Combin. Probab. Comput.} \textbf{18} (2009), pp. 731--748.
\bibitem{GLsurv}
     J.R. Griggs, W.T. Li, Progress on poset-free families of subsets. In Recent trends in combinatorics (2016) pp. 317-338. Cham: Springer International Publishing.

\bibitem{KatonaTarjan}
G. O. H. Katona, T. G. Tarj\'an, Extremal problems with excluded subgraphs in the $n$-cube, in \textit{Graph Theory}, \L{}ag\'ow, 1981, Lecture Notes in Math., vol. 1018, Springer, Berlin, 1983, pp. 84--93.

\bibitem{Kleitman68}
D.~J.~Kleitman,
\newblock Maximal number of subsets of a finite set no $k$ of which are
pairwise disjoint,
\newblock {\em J. Combinatorial Theory} {\bf 5} (1968), 157--163.

\bibitem{P2}
B. Patk\'os, Induced and Non-induced Forbidden Subposet Problems, \textit{The Electronic Journal of Combinatorics}, (2015) P1-30.

\bibitem{Patkos}
B. Patk\'os, On the maximum size of $B_3$-free and $D_s$-free families, arXiv preprint, 2607.11753

\bibitem{PT}
B. Patk\'os, C. Tompkins, Crown-free families and forbidden subposets with $e(P)\in\{1,2\}$, arXiv preprint, 2608.17580
\bibitem{SS}
G. Sarkis, S. Shahriari and PCURC, Diamond-free subsets in the linear lattices, \textit{Order},
31 (2014), 421--433.
\bibitem{SY}
S. Shahriari, S. Yu, Avoiding brooms, forks, and butterflies in the linear lattices. \textit{Order}, 37(2) (2020), 223--242.

\bibitem{Tompkins}
C. Tompkins, An Improved Lower Bound for Diamond-Free Families, arXiv preprint, 2607.09497

\bibitem{XT}
J. Xiao, C. Tompkins, On forbidden poset problems in the linear lattice, \textit{Electronic Journal of Combinatorics}, 27(1) 2020, P1.18. 


\end{thebibliography}
\end{document}